\documentclass[12pt,letterpaper]{amsart}
\usepackage{palatino, euler, epic, eepic, amssymb, xypic, floatflt, microtype, enumerate, stmaryrd}
\input xy
\xyoption{all}

 
 \newlength{\baseunit}               
 \newcount{\numlines}                
 \setlength{\baseunit}{0.05ex}
  


\setlength{\oddsidemargin}{0cm} \setlength{\evensidemargin}{0cm}
\setlength{\marginparwidth}{0in}
\setlength{\marginparsep}{0in}
\setlength{\marginparpush}{0in}
\setlength{\topmargin}{0in}
\setlength{\headheight}{0pt}
\setlength{\headsep}{20pt}
\setlength{\footskip}{.3in}
\setlength{\textheight}{8.8in}
\setlength{\textwidth}{6.5in}
\setlength{\parskip}{4pt}

\linespread{1.15} 

\usepackage{amsmath, amssymb, amscd, verbatim, xspace,amsthm}
\usepackage{latexsym, epsfig, color}

\newcommand\isom{\cong}

\newcommand\Spec{\operatorname{Spec}}

\newcommand\bq{\begin{equation}}
\newcommand\eq{\end{equation}}

\newtheorem{proposition}{Proposition}[section]
\newtheorem{theorem}[proposition]{Theorem}
\newtheorem{corollary}[proposition]{Corollary}

\newtheorem{lemma}[proposition]{Lemma}

\theoremstyle{definition}

\theoremstyle{remark}
\newtheorem{remark}[proposition]{Remark}
\usepackage{url}

\numberwithin{equation}{section}


\newcommand{\cut}[1]{}




\newcommand\hidden[1]{}




\newcommand{\cM}{\mathcal{M}}

\newcommand{\PP}{\mathbb{P}}


                                                                      %
                           %
\newcommand{\setmin}{{\smallsetminus}}                                %
\newcommand{\Mp}{{\mathcal{M}^+}}                                     %
\newcommand{\Rel}{{\mathcal{R}}}                                      %
\newcommand{\dra}{\dashrightarrow}                                    %
\newcommand{\Ker}{\operatorname{Ker}}                                 %
\newcommand{\Img}{\operatorname{Img}}                                 %
\newcommand{\ZZ}{{\mathbb{Z}}}                                        %
\newcommand{\LL}{{\mathbb{L}}}                                        %
\newcommand{\cO}{{\mathcal O}}                                        %
\newcommand{\ch}{\operatorname{\tilde{c}_1}}                          %
\newcommand{\Schk}{\operatorname{Sch}_{k}}                            %
\newcommand{\Smk}{\operatorname{Sm}_{k}}                              %
        %
                               %
                                   %
                                                                      %
                                                                      %
                                          %
                                          %
                                          %
                                          %
                                          %
                                          %
                                          %
                                          %
                                          %
                                          %
                                          %
                                          %
                                                                      %
                                                                      %

\title{Descent for Algebraic Cobordism}

\author{Jos\'e Luis Gonz\'alez and Kalle Karu}
\address{J.L. Gonz\'alez,  Dept. of Mathematics, University of British Columbia,
  Vancouver, BC V6T1Z2, CANADA  \newline \indent
K. Karu,
Dept. of Mathematics, University of British Columbia, 
  Vancouver, BC V6T1Z2, CANADA} 
\email{jgonza@math.ubc.ca, karu@math.ubc.ca}
\thanks{This research was funded by NSERC Discovery and Accelerator grants.}

\begin{document}
\begin{abstract}
We prove the exactness of a descent sequence relating the algebraic cobordism groups of a scheme and its envelopes. Analogous sequences for Chow groups and K-theory were previously proved by Gillet. 

\end{abstract}
\maketitle
\setcounter{tocdepth}{1} 




\section{Introduction}

We work in the category $\Schk$ of separated finite type schemes over a field $k$ of characteristic zero. Recall that a proper morphism of schemes $\pi:\tilde{X}\to X$ is an envelope if for every subvariety $V\subset X$, there exists a subvariety $\tilde{V}\subset \tilde{X}$, such that $\pi$ maps $\tilde{V}$ birationally onto $V$. 

Gillet in \cite{GilletSeq} proved that if $\pi:\tilde{X} \to X$ is an envelope, with $\pi$ projective, then the following sequence is exact:
\begin{equation} \label{eq-gillet}
 A_*(\tilde{X}\times_X\tilde{X}) \xrightarrow{{p_1}_*-{p_2}_*} A_*(\tilde{X}) \xrightarrow{\pi_*} A_*(X) \xrightarrow{} 0.
\end{equation}   
Here $A_*$ is either the Chow theory or the $K$-theory of coherent sheaves,  $p_i: \tilde{X}\times_X\tilde{X} \to \tilde{X}$ are the two projections and ${p_i}_*, \pi_*$ are the push-forward maps in either theory. 
The goal of this article is to prove the exactness of the analogous sequence in the cobordism theory $\Omega_*$  defined by Levine and Morel \cite{Levine-Morel}.

\begin{theorem} \label{thm-seq1} Let $\pi: \tilde{X}\to X$ be an envelope, with $\pi$ projective. Then the sequence
 \[ \Omega_*(\tilde{X}\times_X\tilde{X}) \xrightarrow{{p_1}_*-{p_2}_*} \Omega_*(\tilde{X}) \xrightarrow{\pi_*} \Omega_*(X) \xrightarrow{} 0\]
is exact.
\end{theorem}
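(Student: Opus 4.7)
The strategy is to adapt Gillet's diagram-chase argument for Chow theory and $K$-theory to the setting of $\Omega_*$, using the Levine--Morel localization sequence in place of the classical one. I would argue by Noetherian induction on $\dim X$, and at fixed dimension on the number of top-dimensional irreducible components, assuming the statement for every projective envelope over a proper closed subscheme of $X$. Using resolution of singularities in characteristic zero one may further reduce to the case where $\tilde X$ is smooth: any projective resolution $\tilde X' \to \tilde X$ yields a projective envelope $\tilde X' \to X$, and a separate diagram chase comparing the envelope sequences for $\tilde X' \to \tilde X$ and $\tilde X' \to X$ moves the problem between them.

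For each top-dimensional irreducible component $X_i$ of $X$, the envelope hypothesis produces a subvariety $\tilde X_i \subset \tilde X$ mapping birationally onto $X_i$. Next, I would choose a dense open $U \subset X$ over which each $\tilde X_i$ restricts to an isomorphism onto $X_i \cap U$, and over which the union of the $\tilde X_i$ is disjoint from the remaining components of $\tilde U := \pi^{-1}(U)$. Set $Z := X \setminus U$ and $\tilde Z := \pi^{-1}(Z)$; then $\tilde Z \to Z$ is again a projective envelope, covered by the inductive hypothesis. Assembling the Levine--Morel localization sequences for the triples $(Z, X, U)$, $(\tilde Z, \tilde X, \tilde U)$, and $(\tilde Z \times_Z \tilde Z,\, \tilde X \times_X \tilde X,\, \tilde U \times_U \tilde U)$ produces a commutative three-row diagram whose vertical maps are ${p_1}_* - {p_2}_*$ and $\pi_*$. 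The column over $Z$ is exact by induction. The column over $U$ is exact because the section $s \colon U \hookrightarrow \tilde U$ provided by any of the $\tilde X_i$ splits $\pi|_{\tilde U}$, reducing this case to a direct computation using the decomposition $\tilde U \times_U \tilde U = U \sqcup (U \times_U W) \sqcup (W \times_U U) \sqcup (W \times_U W)$, where $W$ is the complementary clopen piece. A standard diagram chase then delivers exactness of the middle column, which is the statement of the theorem.

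The main obstacle is the step of lifting classes from $\Omega_*(X)$ to $\Omega_*(\tilde X)$ in a way that is well-defined modulo the image of ${p_1}_* - {p_2}_*$. Since $\Omega_*(X)$ is generated by classes $[Y \to X]$ with $Y$ smooth projective, one lifts by resolving the singularities of a subvariety of $Y \times_X \tilde X$ that maps birationally to $Y$; but, unlike in Chow theory, the resulting cobordism class depends genuinely on the resolution chosen. The plan here is to invoke weak factorization of birational maps in characteristic zero, expressing the difference of two lifts as a sequence of blow-ups along smooth centers, and then to use the projective bundle and blow-up formulas for $\Omega_*$ (consequences of the universal formal group law of Levine--Morel) to show that the resulting difference lies in the image of ${p_1}_* - {p_2}_*$. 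Verifying this compatibility rigorously, and weaving it into the diagram chase above, is expected to be the most intricate part of the argument.
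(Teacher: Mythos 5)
Your plan has the right easy pieces (surjectivity of $\pi_*$, and the splitting argument over an open $U$ where the envelope has a section), but the two steps that would have to carry the weight both fail as stated. First, the diagram chase: the localization sequence in algebraic cobordism is only right exact, with no boundary maps, so after you subtract $({p_1}_*-{p_2}_*)\beta$ to make your class vanish on $\tilde U$ and write $\alpha=i_*\alpha'$ with $\alpha'\in\Omega_*(\tilde Z)$, you only know that ${\pi_Z}_*\alpha'$ dies in $\Omega_*(X)$, not in $\Omega_*(Z)$; the inductive hypothesis for the envelope $\tilde Z\to Z$ therefore does not apply, and the chase does not close. In Chow theory this gap is filled not by the chase but by lifting the relations themselves through the envelope: a rational equivalence $\sum_j\operatorname{div}_{V_j}(f_j)$ on $X$ lifts to $\tilde X$ because each $V_j$ lifts birationally and the norm of $f_j$ along a degree-one map is $f_j$. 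Cobordism has no analogue of this in your proposal: its relations are double point degenerations, whose total spaces do not lift along $\pi$, and cobordism classes are not birational invariants; indeed nothing in your plan addresses how the double point relations defining $\Omega_*(X)$ are to be handled, which is precisely where the bulk of the actual proof lies.

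Second, the lifting you propose is not merely hard to control --- it is genuinely not well defined modulo $\im({p_1}_*-{p_2}_*)$, so weak factorization and the blow-up formula cannot rescue it. Take $X$ a smooth projective surface and $\tilde X=X\sqcup \Bl_p X$ with $\pi$ the identity on the first piece and the blow-down on the second; this is a projective envelope. For the generator $[\id_X]$, both copies give subvarieties of $Y\times_X\tilde X$ mapping birationally to $Y=X$, already smooth, so the two candidate lifts are $[X\to\tilde X]$ and $[\Bl_pX\to\tilde X]$; their difference pushes forward to $[X\to X]-[\Bl_pX\to X]$, which is nonzero in $\Omega_*(X)$ (already nonzero in $\LL$ after pushing to the point, since the Chern numbers differ), whereas every element of $\im({p_1}_*-{p_2}_*)$ is killed by $\pi_*$. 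So the difference of two such lifts need not lie in the image, and for the same reason a fixed choice would not split $\pi_*$, because $[\tilde Y\to X]\neq[Y\to X]$ in $\Omega_*(X)$ in general. This is exactly the difficulty the paper is built to circumvent: instead of the class of a resolution, it lifts $[Y\to X]$ to the divisor class $[\tilde D\to|\tilde D|]$ of the \emph{entire} s.n.c.\ special fiber of a blowup of $Y\times\PP^1$ along centers over $Y\times\{0\}$ (Hironaka), so that all exceptional components, weighted through the formal group law, enter the lift and compensate for the failure of birational invariance; weak factorization is then applied to compare two such degenerations (via the product-of-divisor-classes computation over $\PP^1\times\PP^1$, Lemma~\ref{lem-double}), and a further $\PP^1\times\PP^1$ degeneration argument together with Lemma~\ref{lem-pair} shows that double point relations map into $\pi_*\Rel(\tilde X)$, i.e.\ the lifting descends to $\Omega_*(X)$. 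None of that machinery is replaced by ``resolve, then apply weak factorization and the blow-up formula,'' and without it both the splitting and the diagram chase remain incomplete.
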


This theorem has several applications. We mention here  two of them. The first application is in the relationship between algebraic cobordism $\Omega_*(X)$ and algebraic K-theory $G_0(X)$ (the Grothendieck group of the category of coherent sheaves on $X$). Levine and Morel in \cite{Levine-Morel} constructed a natural morphism 
\[ \Omega_*(X)\otimes_\LL \ZZ[\beta,\beta^{-1}] \longrightarrow G_0(X)[\beta,\beta^{-1}],\]
and proved it to be an isomorphism for smooth $X$. Dai in \cite{DaiPaper} extended this isomorphism to all schemes $X$ that can be embedded in a smooth scheme; this includes all quasiprojective schemes $X$. In \cite{ktheory} we build on Dai's work to prove this isomorphism for all schemes $X$ in $\Schk$. Since every scheme $X$ admits a quasiprojective envelope, Theorem~\ref{thm-seq1} implies that the cobordism theory $\Omega_*$ as a functor on $\Schk$ is determined by its restriction to the full subcategory of quasiprojective schemes. A similar statement for $G_0$ proved by Gillet \cite{GilletSeq} then reduces the problem to the case proved by Dai.

The second application of Theorem~\ref{thm-seq1} is in the study of operational bivariant theories of Fulton and MacPherson \cite{Fulton-MacPherson}. Kimura in \cite{Kimura} used the exact sequence (\ref{eq-gillet}) to give an inductive method for finding the operational Chow groups of singular varieties from the Chow groups of smooth varieties. His proof can be generalized to algebraic cobordism  and other homology theories. In \cite{bivariant} we study the operational bivariant theories associated to certain homology theories that have the exact descent sequence. As a special case, we describe the operational equivariant cobordism theory of toric varieties. This result is based on previous work by Payne \cite{Payne} and Krishna and Uma \cite{Krishna-Uma}.



\section{An Overview of Algebraic Cobordism Theory}              \label{section.overview.cobordism}

Algebraic cobordism theory was defined by Levine and Morel in \cite{Levine-Morel}. Later, Levine and Pandharipande \cite{Levine-Pandharipande} found a simpler presentation of the cobordism groups. We will use the construction of Levine-Pandharipande as the definition, but refer to Levine-Morel for its properties.

Let $\Smk$ be the full subcategory of $\Schk$ whose objects are smooth quasiprojective schemes over $\Spec k$. By a smooth morphism we always mean a smooth and quasiprojective morphism.

For $X$ in $\Schk$, let $\cM(X)$ be the set of isomorphism classes of projective morphisms $f: Y\to X$ for $Y\in \Smk$. This set is a monoid under disjoint union of the domains; let $\Mp(X)$ be its group completion. The elements of $\Mp(X)$ are called cycles. The class of $f: Y\to X$ in $\Mp(X)$ is denoted by $[f: Y\to X]$. The group $\Mp(X)$ is free abelian, generated by the cycles $[f: Y\to X]$ where $Y$ is irreducible.

A double point degeneration is a morphism $\pi: Y\to \PP^1$, with $Y \in \Smk$ of pure dimension, such that $Y_\infty = \pi^{-1}(\infty)$ is a smooth divisor on $Y$ and $Y_0=\pi^{-1}(0)$ is a union $A\cup B$ of smooth divisors intersecting transversely along $D=A\cap B$.  Define $\PP_D = \PP(\cO_D(A)\oplus \cO_D)$, where $\cO_D(A)$ stands for $\cO_Y(A)|_D$. (Notice that $\PP(\cO_D(A)\oplus \cO_D) \isom \PP(\cO_D(B) \oplus \cO_D)$ because $\cO_D(A+B)\isom \cO_D$.)

Let $X\in \Schk$ and let $Y\in \Smk$ have pure dimension. Let $p_1, p_2$ be the two projections of $X\times \PP^1$.  A double point relation is defined by a projective morphism $\pi: Y\to X\times\PP^1$, such that $p_2\circ \pi: Y\to \PP^1$ is a double point degeneration. Let 
\[ [Y_\infty \to X], \quad [A\to X],\quad [B\to X], \quad [\PP_D \to X] \]
be the cycles obtained by composing with $p_1$. The double point relation is 
\[ [Y_\infty \to X] -[A\to X] - [B\to X] + [\PP_D\to X] \in \Mp(X). \]

Let $\Rel(X)$ be the subgroup of $\Mp(X)$ generated by all the double point relations. The cobordism group of $X$ is defined to be
\[ \Omega_*(X) = \Mp(X)/\Rel(X).\]
The group $\Mp(X)$ is graded so that $[f: Y\to X]$ lies in degree $\dim Y$ when $Y$ has pure dimension. Since double point relations are homogeneous, this grading gives a grading on $\Omega_*(X)$. We write $\Omega_n(X)$ for the degree $n$ part of  $\Omega_*(X)$. 

There is a functorial push-forward homomorphism $f_*: \Omega_*(X)\to \Omega_*(Z)$ for $f: X\to Z$ projective, and a functorial pull-back homomorphism $g^*: \Omega_*(Z)\to \Omega_{*+d}(X)$ for $g: X\to Z$ a  
smooth morphism of relative dimension $d$. These homomorphisms are both defined on the cycle level.  Levine and Morel also construct pull-backs along l.c.i. morphisms and, more generally, refined l.c.i. pullbacks. We will not need these pullbacks below.

The cobordism theory has exterior products 
\[ \Omega_*(X)\times \Omega_*(W) \longrightarrow \Omega_*(X\times W),\]
defined on the cycle level:
 \[ [Y\to X] \times [Z\to W] = [Y\times Z \to X\times W].\]
These exterior products turn $\Omega_*(\Spec k)$ into a graded ring and $\Omega_*(X)$ into a graded module over $\Omega_*(\Spec k)$. When $X$ is in $\Smk$, we denote by $1_X$ the class $[id_X: X\to X]$.


\subsection{First Chern Class Operators}

Algebraic cobordism is endowed with first Chern class operators 
\[ \ch(L): \Omega_*(X)\to \Omega_{*-1}(X),\]
associated to any line bundle $L$ on $X$.
This operator is also denoted by $\ch(\mathcal{L})$, where $\mathcal{L}$ is the invertible sheaf of sections of $L$. 
We recall some properties of these operators that are needed below.

A formal group law on a commutative ring $R$ is a power series $F_R(u,v)\in R\llbracket u,v\rrbracket $ satisfying
\begin{enumerate}[(a)]
\item $F_R(u,0) = F_R(0,u) = u$,
\item $F_R(u,v)=  F_R(v,u)$,
\item $F_R(F_R(u,v),w) = F_R(u,F_R(v,w))$.
\end{enumerate}
Thus 
\[ F_R(u,v) = u+v +\sum_{i,j>0} a_{i,j} u^i v^j,\]
where $a_{i,j}\in R$ satisfy $a_{i,j}=a_{j,i}$ and some additional relations coming from property (c). We think of $F_R$ as giving a formal addition
\[ u+_{F_R} v = F_R(u,v).\]
There exists a unique power series $\chi(u) \in R\llbracket u\rrbracket $ such that $F_R(u,\chi(u)) = 0$. Denote $[-1]_{F_R} u = \chi(u)$. Composing $F_R$ and $\chi$, we can form linear combinations 
\[ [n_1]_{F_R} u_1 +_{F_R}   [n_2]_{F_R} u_2 +_{F_R} \cdots +_{F_R} [n_r]_{F_R} u_r \in R\llbracket u_1,\ldots,u_r\rrbracket \]
for $n_i\in \ZZ$ and $u_i$ variables.

There exists a universal formal group law $F_\LL$, and its coefficient ring $\LL$ is called the \emph{Lazard ring}. This ring can be constructed as the quotient of the polynomial ring $\ZZ[A_{i,j}]_{i,j>0}$ by the relations imposed by the three axioms above. The images of the variables $A_{i,j}$ in the quotient ring are the coefficients $a_{i,j}$ of the formal group law $F_\LL$. The ring $\LL$ is graded, with $A_{i,j}$ having degree $i+j-1$. The power series $F_\LL(u,v)$ is then homogeneous of degree $-1$ if $u$ and $v$ both have degree $-1$. 

It is shown in \cite{Levine-Morel} that the graded group $\Omega_*(\Spec k)$ is isomorphic to $\LL$. The formal group law on $\LL$ describes the first Chern class operators of tensor products of line bundles (property (FGL)  below). 

We list three properties satisfied by the first Chern class operators $\ch(L): \Omega_*(Y) \to \Omega_{*-1}(Y)$ for $Y \in \Smk$ and $L$ a line bundle on $Y$:
\begin{itemize}
\item[(Dim)] For $L_1,\ldots,L_r$ line bundles on $Y$, $r>\dim Y$,
\[\ch(L_1)\circ\cdots\circ \ch(L_r) (1_Y) = 0.\]
\item[(Sect)] If $L$ is a line bundle on $Y$ and $s\in H^0(Y,L)$ is a section such that the zero subscheme $i:Z\hookrightarrow Y$ of $s$ is smooth, then
\[ \ch(L)(1_Y) = i_*(1_Z).\]
\item[(FGL)] For two line bundles $L$ and $M$ on $Y$,
\[ \ch(L\otimes M)(1_Y) = F_\LL(\ch(L), \ch(M))(1_Y).\]
\end{itemize}

In the terminology of \cite{Levine-Morel, Levine-Pandharipande} the three properties imply that $\Omega_*$ is an oriented Borel-Moore functor of geometric type. 

The first Chern class operators of two line bundles commute: $\ch(L)\circ\ch(M) = \ch(M)\circ \ch(L)$, and they are compatible with smooth (l.c.i.) pull-backs, projective push-forwards and exterior products.  The property (Sect) above implies that if $L$ is a trivial line bundle on $X$, then the first Chern class operator of $L$ is zero.


\subsection{Divisor Classes} \label{sec-div-cl}

Recall that a divisor $D$ on a smooth scheme $Y \in \Schk$ has strict normal crossings (s.n.c.) if at every point $p\in Y$ there exists a system of regular parameters $y_1,\ldots,y_n$, such that  $D$ is defined by the equation $y_1^{m_1}\cdots y_n^{m_n}=0$ near $p$ for some integers $m_1, \ldots, m_n$.

Let $D = \sum_{i=1}^r n_i D_i$ be a nonzero s.n.c. divisor on a scheme $Y \in \Smk$, with $D_i$ irreducible. Let us recall the construction by Levine and Morel \cite{Levine-Morel} of the class $[D\to |D|] \in \Omega_*(|D|)$.  

Let 
\[ F^{n_1,\ldots,n_r}(u_1,\ldots,u_r) = [n_1]_{F_\LL} u_1 +_{F_\LL}   [n_2]_{F_\LL} u_2 +_{F_\LL} \cdots +_{F_\LL} [n_r]_{F_\LL} u_r \in \LL\llbracket u_1,\ldots,u_r\rrbracket .\]
We decompose this power series as 
\[ F^{n_1,\ldots,n_r}(u_1,\ldots,u_r) = \sum_J F_J^{n_1,\ldots,n_r}(u_1,\ldots,u_r) \prod_{i\in J} u_i,\]
where the sum runs over nonempty subsets $J\subset \{1,\ldots, r\}$. The power series $F_J^{n_1,\ldots,n_r}$ are such that $u_i$ does not divide any nonzero term in $F_J^{n_1,\ldots,n_r}$ if $i\notin J$. 

For $i=1,\ldots,r$, let $L_i=\cO_Y(D_i)$. If $J\subset \{1,\ldots,r\}$, let  $i^J: D^J=\cap_{i\in J} D_i \hookrightarrow |D|$, and $L_i^J =  L_i|_{D^J}$. The class $[D\to|D|]$ is defined in \cite{Levine-Morel} as
\begin{equation} \label{eq2}
  [D\to|D|] = \sum_J i_*^J F_J^{n_1,\ldots,n_r} (L_1^J, \ldots, L_r^J) (1_{D^J}),
\end{equation}
where the sum runs over nonempty subsets $J\subset \{1,\ldots,r\}$ and   
$F_J^{n_1,\ldots,n_r} (L_1^J, \ldots, L_r^J)$ is the power series $F_J^{n_1,\ldots,n_r}$ evaluated on the first Chern classes of $L_1^J, \ldots, L_r^J$.

When pushed forward to $Y$, the class $[D\to|D|]$ becomes equal to $\ch(\cO(D))(1_Y)$. To see this, compute,
\begin{alignat*}{2}
\ch(\cO(D))(1_Y)  &= F^{n_1,\ldots,n_r} (L_1, \ldots, L_r) (1_Y)\\
&= \sum_J F_J^{n_1,\ldots,n_r} (L_1, \ldots, L_r) \prod_{i\in J} \ch(L_i) (1_Y).
\end{alignat*}
Applying the property (Sect) repeatedly, we get
\[  \prod_{i\in J} \ch(L_i) (1_Y) = [ D^J \hookrightarrow Y].\]
Compatibility of first Chern class operators with pull-backs of line bundles then gives the desired divisor class formula pushed forward to $Y$.

We note that in the definition of divisor classes it is not necessary to assume that $D_i$ are irreducible. We may let them be smooth but possibly reducible divisors and then the same formula holds.


\subsection{A Pair of Smooth Divisors.}
Let $A$ and $B$ be smooth divisors on $Y\in \Smk$ that intersect transversely along $D=A\cap B$. Consider
\[ [A+B \to |A\cup B|] = [A\to A\cup B]+ [B\to A \cup B]+ i_* F^{1,1}_{\{1,2\}}(\cO_D(A), \cO_D(B)) (1_D),\]
where $i: D\hookrightarrow |A\cup B|$ and $\cO_D(A), \cO_D(B)$ stand for $\cO_Y(A)|_D, \cO_Y(B)|_D$.  Let 
 \[ \PP_D = \PP(\cO_D(A)\oplus \cO_D).\]

\begin{lemma} \label{lem-pair}
With notation as above, let $\cO_Y(A+B+C)|_D = \cO_D$ for some divisor $C$ on $Y$. Then
\[ F^{1,1}_{\{1,2\}}(\cO_D(A), \cO_D(B)) (1_D) = -[\PP_D\to D] + \beta,\]
where 
\[ \beta = \sum_{i,j\geq 0, l> 0} b_{ijl} \ch(\cO_D(A))^i \ch(\cO_D(B))^j \ch(\cO_D(C))^l (1_D),\]
for some universally defined $b_{ijl}\in \LL$ that do not depend on $Y,A,B,C$.
\end{lemma}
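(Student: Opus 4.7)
My plan is to combine a universal formula for $[\PP_D \to D]$, derived from the projective bundle structure, with the formal group law identity coming from the hypothesis. Set $u = \ch(\cO_D(A))$, $v = \ch(\cO_D(B))$, $w = \ch(\cO_D(C))$, and $L = \cO_D(A)$, so that $\chi(u) = \ch(L^{-1})$. The hypothesis $\cO_Y(A+B+C)|_D \cong \cO_D$ means that $\cO_D(A+B+C)$ is trivial with nowhere-vanishing constant section $1$, so by property (Sect), $\ch(\cO_D(A+B+C))(1_D) = 0$; applying (FGL) and the associativity of $F_\LL$, this becomes the key relation
\[
F^{1,1,1}(u,v,w)(1_D) \;=\; (u +_{F_\LL} v +_{F_\LL} w)(1_D) \;=\; 0 \qquad \text{in } \Omega_*(D).
\]

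I would next establish the universal identity
\[
[\PP_D \to D] \;=\; -\,F^{1,1}_{\{1,2\}}(u,\chi(u))(1_D).
\]
The projection $\pi \colon \PP_D = \PP(L \oplus \cO_D) \to D$ carries two disjoint sections $s_0, s_\infty \colon D \hookrightarrow \PP_D$, corresponding to the subbundles $\cO_D$ and $L$, with normal bundles $L$ and $L^{-1}$ respectively. The section $(0,1)$ of $\cO_{\PP_D}(1)$ vanishes along $s_\infty(D)$ and the section $(1,0)$ of $\cO_{\PP_D}(1) \otimes \pi^*L$ vanishes along $s_0(D)$, so by (Sect),
\[
\xi(1_{\PP_D}) \;=\; (s_\infty)_*(1_D), \qquad F_\LL(\xi, \pi^*u)(1_{\PP_D}) \;=\; (s_0)_*(1_D),
\]
where $\xi = \ch(\cO_{\PP_D}(1))$. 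Disjointness of the sections yields the characteristic identity $\xi \cdot F_\LL(\xi,\pi^*u)(1_{\PP_D}) = 0$ in $\Omega_*(\PP_D)$. Combining this with the self-intersection formula $s_\infty^*(s_\infty)_*1_D = \chi(u)(1_D)$, so that $\pi_* \xi^k(1_{\PP_D}) = \chi(u)^{k-1}(1_D)$ for $k \geq 1$, together with the projective bundle decomposition of $\Omega_*(\PP_D)$, one may push forward by $\pi_*$ and collect terms to obtain the displayed formula for $[\PP_D \to D]$.

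With both ingredients in hand, the proof concludes by a formal power series manipulation. The difference $F^{1,1}_{\{1,2\}}(u,v) - F^{1,1}_{\{1,2\}}(u,\chi(u))$ vanishes when $v = \chi(u)$ and is hence divisible by $v - \chi(u)$ as an element of $\LL\llbracket u,v\rrbracket$. The relation $F^{1,1,1}(u,v,w)(1_D) = 0$ can be solved formally to give $v - \chi(u) = w \cdot H(u,v,w)$ for some $H \in \LL\llbracket u,v,w\rrbracket$; explicitly, $v = \chi(u +_{F_\LL} w)$. Substituting and using the formula for $[\PP_D \to D]$ produces
\[
F^{1,1}_{\{1,2\}}(u,v)(1_D) + [\PP_D \to D] \;=\; \sum_{i,j \geq 0,\, l > 0} b_{ijl}\,u^i v^j w^l(1_D),
\]
for universal $b_{ijl} \in \LL$, which is the statement of the lemma.

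The main obstacle is the closed formula $[\PP_D \to D] = -F^{1,1}_{\{1,2\}}(u,\chi(u))(1_D)$. The characteristic identity $\xi \cdot F_\LL(\xi,\pi^*u)(1_{\PP_D}) = 0$ is the crucial tool, but extracting the exact formula requires careful bookkeeping with the formal group law and the projective bundle structure for $\Omega_*$. This may be done via direct appeal to Levine--Morel's analysis of projective bundles in cobordism or by constructing a suitable double point degeneration in which $\PP_D$ arises as a component of the special fiber.
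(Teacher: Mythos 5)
Your outline follows essentially the paper's route. The identity you isolate as the main obstacle, $[\PP_D\to D]=-F^{1,1}_{\{1,2\}}(\ch(\cO_D(A)),\chi(\ch(\cO_D(A))))(1_D)$, is precisely what the paper quotes from the proof of Lemma~9 of Levine--Pandharipande, so your suggestion to settle it by appeal to the literature (rather than redoing the projective-bundle computation or building a double point degeneration) is exactly what is done; there is no need to carry out your sketch with the two sections of $\PP(\cO_D(A)\oplus\cO_D)$. After that, both arguments are the same formal-group-law bookkeeping that separates off the terms carrying a positive power of $\ch(\cO_D(C))$, and the universality of the $b_{ijl}$ comes for free since all the power series involved have coefficients in $\LL$ determined by $F_\LL$ alone.

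One step is stated in a logically invalid way, though the repair is immediate. You cannot ``solve formally'' the relation $F^{1,1,1}(u,v,w)(1_D)=0$ for $v$: that relation is an equality of classes in $\Omega_*(D)$ obtained after applying commuting operators to the single element $1_D$, and it does not license substituting $v=\chi(u+_{F_\LL}w)$ inside other operator expressions applied to $1_D$ (for instance inside $(v-\chi(u))\,G(u,v)(1_D)$, where $G(u,v)(1_D)$ is a different class). What you actually need, and what the paper uses, is the isomorphism of line bundles $\cO_D(-A)\cong\cO_D(B+C)$ (equivalently $\cO_D(B)\cong\cO_D(-A-C)$) coming from the hypothesis $\cO_Y(A+B+C)|_D\cong\cO_D$; this gives an equality of first Chern class \emph{operators}, and then the formal group law, applied at the operator level exactly as in the paper's expansion of $\ch(\cO_D(B+C))$, yields the substitution you want. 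With that justification in place, your divisibility argument (divisibility of $F^{1,1}_{\{1,2\}}(u,v)-F^{1,1}_{\{1,2\}}(u,\chi(u))$ by $v-\chi(u)$, and of $\chi(u+_{F_\LL}w)-\chi(u)$ by $w$) does go through and reproduces the paper's conclusion, which is obtained there more directly by writing $-[\PP_D\to D]=F^{1,1}_{\{1,2\}}(\cO_D(A),\cO_D(B+C))(1_D)$, expanding $\ch(\cO_D(B+C))$ by the formal group law, and collecting the terms that involve $\ch(\cO_D(C))$ into $-\beta$.
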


\begin{proof} It is shown in the proof of \cite[Lemma~9]{Levine-Pandharipande} that
\[  -[\PP_D\to D]  = F^{1,1}_{\{1,2\}}(\cO_D(A), \cO_D(-A)) (1_D).\]
Substituting $\cO_D(B+C) = \cO_D(-A)$, we get
\begin{alignat*}{2} -[\PP_D\to D]  &= F^{1,1}_{\{1,2\}}(\cO_D(A), \cO_D(B+C)) (1_D) \\
&= \sum_{i,j>0} a_{ij} \ch(\cO_D(A))^{i-1} \ch(\cO_D(B+C))^{j-1}(1_D),
\end{alignat*}
with $a_{ij}$ the coefficients in the formal group law of $\LL$. 
To compute $\ch(\cO_D(B+C))$, we apply the formal group law again:
\[ \ch(\cO_D(B+C)) = \ch(\cO_D(B))+\ch(\cO_D(C))+ \sum_{i,j> 0} a_{ij} \ch(\cO_D(B))^i \ch(\cO_D(C))^j.\]
Now substituting this into the expression of $-[\PP_D\to D]$, the terms that involve only $\ch(\cO_D(A))$ and $\ch(\cO_D(B))$ give $F^{1,1}_{\{1,2\}}(\cO_D(A), \cO_D(B))(1_D)$, the remaining terms give the class $-\beta$.  
\end{proof}


\section{Proof of the main theorem}     \label{section.gillet.cobordism}

We will now prove Theorem~\ref{thm-seq1}. Let us start with some notation. Recall that
\[ \Omega_*(X) = \Mp(X)/\Rel(X),\]
where $\Mp(X)$ is the group of cycles and $\Rel(X)$ is the subgroup of relations, generated by double point relations. We identify $\Omega_*(\Spec k)$ with the Lazard ring $\LL$. Then $\Omega_*(X)$ is an $\LL$-module.

Composition with $\pi$ gives the push-forward map $\pi_*: \Mp(\tilde{X})\to \Mp(X)$, taking $\Rel(\tilde{X})$ to $\Rel(X)$. The induced map $\Omega_*(\tilde{X}) \to \Omega_*(X)$ is also denoted $\pi_*$.  Define
\[ \Omega_*(X)_\pi = \pi_* \Mp(\tilde{X}) / \pi_* \Rel(\tilde{X}).\]
If $K_\pi = \Ker(\pi_*: \Mp(\tilde{X})\to \Mp(X))$, then
\begin{equation} \label{eq1}
 \Omega_*(X)_\pi \isom \Mp(\tilde{X}) /  (\Rel(\tilde{X})+K_\pi).
\end{equation}

There are natural maps of $\LL$-modules that factor $\pi_*$ as
\[ \Omega_*(\tilde{X}) \stackrel{\phi}{\longrightarrow}  \Omega_*(X)_\pi \stackrel{\psi}{\longrightarrow} \Omega_*(X).\]
The map $\phi$ is surjective by (\ref{eq1}). We claim that $\psi$ is also surjective. For every subvariety $Y\subset X$, choose a resolution of singularities given by a projective birational morphism $\tilde{Y}\to Y$. Then the cycles $[\tilde{Y}\to X]$ generate $\Omega_*(X)$ as an $\LL$-module \cite{Levine-Morel}. We can choose the resolutions so that $\tilde{Y}\to X$ factor through $\pi:\tilde{X}\to X$, hence these classes lie in $\pi_* \Mp(\tilde{X})$.

The following is the main ingredient for proving Theorem~\ref{thm-seq1}.

\begin{proposition} \label{prop-isom}
 The map $\psi:  \Omega_*(X)_\pi \to \Omega_*(X)$ is an isomorphism. 
\end{proposition}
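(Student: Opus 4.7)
The plan is to prove injectivity of $\psi$; surjectivity was already established in the discussion preceding the proposition. Injectivity amounts to showing that any $\tilde\alpha \in \Mp(\tilde X)$ with $\pi_*\tilde\alpha \in \Rel(X)$ actually lies in $\Rel(\tilde X) + K_\pi$. I would aim to establish the following stronger \emph{lifting statement}: every generating double point relation $r \in \Rel(X)$ is the image under $\pi_*$ of some double point relation $\tilde r \in \Rel(\tilde X)$. Once this holds, writing $\pi_*\tilde\alpha = \sum_j r_j = \pi_*(\sum_j \tilde r_j)$ shows that $\tilde\alpha - \sum_j \tilde r_j$ lies in $K_\pi$, hence $\tilde\alpha \in \Rel(\tilde X) + K_\pi$.

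To lift a double point relation $r$ arising from a double point degeneration $p \colon Y \to X \times \PP^1$, I would form the projective base change $Y \times_X \tilde X \to \tilde X \times \PP^1$ and then apply Hironaka's resolution of singularities together with semistable reduction in characteristic zero. This should produce a smooth projective $\tilde Y$ together with a projective morphism $\tilde p \colon \tilde Y \to \tilde X \times \PP^1$ whose composition with the second projection is again a double point degeneration. Because $\pi$ is an envelope, each of the subvarieties $Y_\infty, A, B, D$ on $Y$ admits a birational preimage inside $\tilde Y$, and I would try to arrange the construction so that the four generating cycles of the resulting relation $\tilde r$ push forward under $\pi$ to the four generating cycles of $r$.

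The main obstacle will be the bookkeeping required by the resolution and semistable reduction steps: they will typically introduce additional exceptional components in $\tilde Y_\infty$, $\tilde A$, $\tilde B$, and $\tilde{\PP}_{\tilde D}$, supported over proper subvarieties of the image of $Y$ in $X$. To absorb these into $\pi_*\Rel(\tilde X)$, I anticipate an induction on the dimension of that image (with the base case being essentially trivial once $\pi$ is finite over the support), together with a careful use of the divisor class formula (\ref{eq2}) and the $\PP$-bundle identification of Lemma~\ref{lem-pair}. These two tools are exactly what one needs to rewrite the contribution of the special fiber of the lifted degeneration so that the unwanted exceptional pieces either cancel in $\Mp(X)$ or can be reinterpreted as pushforwards of further double point relations on $\tilde X$.
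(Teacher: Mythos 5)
Your reduction rests on the claim that every generating double point relation $r \in \Rel(X)$ equals $\pi_*\tilde r$ for some $\tilde r \in \Rel(\tilde X)$, and this claim is false. Since $\Mp(X)$ is free abelian on the classes $[Y\to X]$ with $Y$ irreducible, and $\pi_*$ sends basis elements of $\Mp(\tilde X)$ to basis elements of $\Mp(X)$, the subgroup $\pi_*\Mp(\tilde X)$ is spanned by exactly those basis cycles that admit a factorization through $\tilde X$. A typical double point relation on $X$ involves cycles (for instance the generic fiber $[Y_\infty\to X]$) that do not factor through $\tilde X$: already for $\pi$ the blowup of a point of a smooth $X$ (an envelope) and a degeneration whose fiber at infinity is $\mathrm{id}_X$, the cycle $[X\xrightarrow{\mathrm{id}} X]$ has no lift, so $r\notin\pi_*\Mp(\tilde X)$, let alone $\pi_*\Rel(\tilde X)$. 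What injectivity of $\psi$ actually requires is the containment $\Rel(X)\cap\pi_*\Mp(\tilde X)\subseteq\pi_*\Rel(\tilde X)$, and in a decomposition $\pi_*\tilde\alpha=\sum_j r_j$ the individual $r_j$ need not have liftable terms even though the sum does; so lifting relation by relation cannot work even in principle. This is precisely why the paper does not lift relations: it instead constructs a homomorphism $d\colon\Mp(X)\to\Omega_*(X)_\pi$ (the distinguished lifting), replacing each cycle $[Y\to X]$ by the class $[\tilde D\to|\tilde D|]$ of a degenerate s.n.c.\ fiber of a modification of $Y\times\PP^1$ whose components do factor through $\tilde X$ --- i.e.\ cycles are lifted only up to cobordism --- then proves $d$ is independent of choices (via weak factorization and Lemma~\ref{lem-double}) and kills $\Rel(X)$, so that $d$ descends to a left inverse of $\psi$.

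Your fallback --- absorbing the exceptional components created by resolution and semistable reduction by induction on the dimension of the image, using (\ref{eq2}) and Lemma~\ref{lem-pair} --- is a restatement of the difficulty rather than an argument: it is exactly the content of the paper's proof that $d(\Rel(X))=0$, which needs the product classes $[D\bullet E\to|D|\cap|E|]$, the double-cobordism Lemma~\ref{lem-double}, and the formal-group-law cancellation of the $\beta_I$ terms, none of which your sketch supplies. Note also that semistable reduction proceeds by a ramified base change of $\PP^1$ and produces special fibers with arbitrarily many components, so the output is no longer a double point degeneration and the fiber over $\infty$ is altered as well; converting it back into elements of $\Rel(\tilde X)$ whose pushforward matches the original relation is an unproved step, and for the exact lifting your reduction demands it is impossible for the reason above.
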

 
{\em Proof of Theorem~\ref{thm-seq1}}. Let us assume Proposition~\ref{prop-isom} and prove Theorem~\ref{thm-seq1}. Clearly the sequence is a complex by functoriality of push-forward. Moreover, $\pi_* = \psi\circ\phi$ is surjective. We only need to prove exactness in the middle: $\Ker \pi_* \subset \Img ( {p_1}_*-{p_2}_* )$. By Proposition~\ref{prop-isom}, $\Ker \pi_* = \Ker \phi$, which by (\ref{eq1}) is the image of $K_\pi$ in   $\Omega_*(\tilde{X})$. Now $K_\pi$ is generated by cycles 
\[ [f:Y\to \tilde{X}] - [g:Y\to \tilde{X}],\]
where $\pi\circ f= \pi\circ g$. These generators lift to cycles 
\[ 
\pushQED{\qed} 
[(f,g): Y\to \tilde{X}\times_X\tilde{X}] \in \Omega_*(\tilde{X}\times_X\tilde{X}). \qedhere
\popQED
 \]

To prove Proposition~\ref{prop-isom}, we follow the argument in \cite[Chapter 6]{Levine-Morel} showing that $\Omega_*(X)_D \to \Omega_*(X)$ is an isomorphism. Here $\Omega_*(X)_D$ is the group defined by cycles and relations transverse to a divisor $D$. The proof has two steps:
\begin{enumerate}
 \item Define a distinguished lifting $\Mp(X) \xrightarrow{d} \Omega_*(X)_\pi$, such that the composition 
 \[ \Mp(\tilde{X}) \stackrel{\pi_*}{\longrightarrow} \Mp(X)  \stackrel{d}{\longrightarrow} \Omega_*(X)_\pi \]
 is the canonical homomorphism.
\item Show that $d$ maps $\Rel(X)$ to zero, hence it descends to $d: \Omega_*(X) \to \Omega_*(X)_\pi$, providing a left inverse to $\psi$ and proving that $\psi$ is injective.
\end{enumerate}


\subsection{Elimination of Indeterminacies}

We will need to eliminate the indeterminacies of rational maps $Y\dra
\tilde{X}$.  We use the following well-known theorem.

\begin{theorem} (Hironaka \cite{Hironaka}) \label{thm-resolution} Let
$Y$ be a smooth variety, $D\subset Y$ a divisor with strict normal
crossings. Let $f: \tilde{Y}\to Y$ be a projective birational morphism,
$U\subset Y$ a nonempty  open set such that $f: f^{-1}(U)\to U$ is an
isomorphism. Then there exists a sequence of morphisms
  \[ Y=Y_1 \stackrel{g_1}{\longleftarrow} Y_2
\stackrel{g_2}{\longleftarrow} Y_3 \longleftarrow \ldots
\stackrel{g_{m-1}}{\longleftarrow} Y_m,\]
such that
\begin{enumerate}
  \item For each $i$ the map $g_i$ is the blowup of $Y_{i}$ along a
smooth center $C_i$, where $C_i$ lies over $Y\setmin U$ and intersects
the union of the pull-back of $D$ and the exceptional locus of $Y_i\to Y$
normally.
\item The rational map $Y_m\to Y\dra \tilde{Y}$ extends to a morphism
$Y_m\to \tilde{Y}$.
\end{enumerate}
\end{theorem}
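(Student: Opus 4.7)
The plan is to reduce the statement to Hironaka's principalization theorem for ideal sheaves on smooth varieties, which in characteristic zero is well-established. First, since $f : \tilde{Y} \to Y$ is projective and birational with $f^{-1}(U) \to U$ an isomorphism, a standard argument (take a relatively very ample invertible sheaf on $\tilde{Y}$, push forward a large twist, and consider the image of the natural evaluation map) produces a coherent ideal sheaf $\mathcal{I} \subset \cO_Y$ with $V(\mathcal{I}) \sub Y \setmin U$ such that $\tilde{Y}$ is (dominated by, and may be taken to be) the blowup $\Bl_{\mathcal{I}}(Y)$. The problem thus reduces to producing a sequence $Y = Y_1 \longleftarrow Y_2 \longleftarrow \cdots \longleftarrow Y_m$ of blowups meeting conditions (1) of the theorem, whose composition pulls $\mathcal{I}$ back to a locally principal ideal sheaf on $Y_m$.

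Next, I would invoke the strong form of Hironaka's principalization theorem for ideals on a smooth variety, compatible with a chosen s.n.c.\ divisor. In characteristic zero this statement provides exactly the required output: a finite sequence of blowups along smooth centers $C_i \sub Y_i$, each lying over $V(\mathcal{I}) \sub Y \setmin U$ and having normal crossings with the union of the strict transform of $D$ and the accumulating exceptional divisor of $Y_i \to Y$, such that the total transform $\mathcal{I} \cdot \cO_{Y_m}$ is an invertible ideal sheaf. Modern algorithmic proofs (Bierstone--Milman, W{\l}odarczyk, Koll\'ar) give this in the form needed.

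Once $\mathcal{I} \cdot \cO_{Y_m}$ is invertible on $Y_m$, the universal property of the blowup supplies a unique morphism $Y_m \to \Bl_{\mathcal{I}}(Y) = \tilde{Y}$ through which $Y_m \to Y$ factors. Since each $g_i$ is an isomorphism over $U$ (its center being disjoint from the preimage of $U$), this morphism restricts over $U$ to the inverse of $f$, which is precisely to say that it extends the rational map $Y_m \to Y \dra \tilde{Y}$; this verifies condition (2).

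The main obstacle is of course the invocation of Hironaka's principalization in the precise form with the s.n.c.\ compatibility on the centers of blowup. This is a nontrivial piece of resolution theory, but in characteristic zero it is by now standard and can be taken as a black box. The only nonformal check in the plan is that the version of principalization one cites produces centers meeting \emph{both} the pull-back of $D$ and the evolving exceptional locus transversely, rather than just one of these; the algorithmic proofs deliver exactly this simultaneous compatibility by running the algorithm on $Y$ relative to $D$ as a prescribed boundary.
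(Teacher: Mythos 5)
Your proposal matches the paper's argument: the paper likewise reduces the statement to principalization by noting one may take $f:\tilde{Y}\to Y$ to be the blowup of a coherent ideal sheaf with co-support in $Y\setmin U$, invokes Hironaka's principalization (with the centers meeting the pull-back of $D$ and the accumulating exceptional locus normally), and concludes via the universal property of the blowup that $Y_m\to\tilde{Y}$ is a morphism. This is essentially the same approach, with your write-up merely supplying more detail on how the ideal sheaf is produced.
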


Recall that if $Y$ is smooth and $D$ is an s.n.c. divisor on $Y$, then a
smooth subscheme $C\subset Y$ is said to intersect $D$ normally if at
every point $p\in Y$ we can choose a regular system of parameters
$y_1,\ldots, y_r$ so that $D$ is defined by $y_1^{n_1}\cdots
y_r^{n_r}=0$ for some $n_1,\ldots,n_r\in \ZZ$ and $C$ is defined by
vanishing of $y_{i_1},\ldots,y_{i_j}$ for some $i_1,\ldots,i_j$. If $D$
is an s.n.c. divisor and $C$ intersects it normally, then the blowup of
$Y$ along $C$ is smooth and the pull-back of $D$ together with the
exceptional divisor is again an s.n.c. divisor.

The proof of the above theorem is reduced to the problem of principalizing an
ideal sheaf as follows. One may assume that $f: \tilde{Y}\to Y$ is the
blowup of a coherent sheaf of ideals $I$ on $Y$ with co-support in
$Y\setmin U$. If the sequence of blowups $g: Y_m\to Y$ is such that
$g^*(I)$ is principal, then the birational map $Y_m\to \tilde{Y}$ is a
morphism.

We will apply Theorem~\ref{thm-resolution} in the following situation.

\begin{corollary}\label{cor-resolution}
Let $Y$ be a smooth variety and $D$ an s.n.c. divisor on $Y$. Let $\phi:
Y\to X$ be a proper morphism. Since $\pi:\tilde{X}\to X$ is an envelope,
there exists a subvariety $Z\subset \tilde{X}$ mapping birationally onto
$\phi(Y)\subset X$. Let $V\subset \phi(Y)$ be a nonempty open subset
such that $\pi|_{Z}^{-1}(V) \to V$ is an isomorphism, and let $U =
\phi^{-1}(V)\subset Y$.

Then there exists a sequence of blowups $g:Y_m\to Y$ of smooth centers
that lie over $Y\setmin U$ and intersect the inverse image of $D$
together with the exceptional locus normally, such that the composition
$Y_m\to Y\to X$ factors through $\tilde{X}$.
\end{corollary}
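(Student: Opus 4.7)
The plan is to reduce to Hironaka's theorem (Theorem~\ref{thm-resolution}) by first constructing a projective birational model $\tilde{Y}\to Y$ over which the rational map $Y\dra\tilde{X}$ (induced by $\phi$ and the birational inverse of $\pi|_{Z}:Z\to \phi(Y)$) extends to a morphism. A resolution of the rational map $Y\dra\tilde{Y}$ will then simultaneously give the desired sequence of blowups and the factorization through $\tilde{X}$.

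To construct $\tilde{Y}$, I would form the fiber product $Y\times_{\phi(Y)} Z$, where $Y\to \phi(Y)$ is the restriction of $\phi$ and $Z\to \phi(Y)$ is the restriction of $\pi$. Since $\pi:\tilde{X}\to X$ is projective (the standing hypothesis of Theorem~\ref{thm-seq1}) and $Z\hookrightarrow\tilde{X}$ is a closed immersion, the morphism $Z\to\phi(Y)$ is projective, hence so is $Y\times_{\phi(Y)} Z\to Y$. The assumption that $\pi|_{Z}^{-1}(V)\to V$ is an isomorphism identifies $U=\phi^{-1}(V)$ with an open subscheme of the fiber product via $U\isom U\times_V \pi|_{Z}^{-1}(V)$; let $\tilde{Y}$ be its closure, with the reduced structure. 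Then $\tilde{Y}$ is irreducible, $\tilde{Y}\to Y$ is projective and birational (an isomorphism over the dense open $U\subset Y$), and by the universal property of the fiber product the two maps $\tilde{Y}\to X$ obtained by projecting through $Y$ and through $Z\hookrightarrow\tilde{X}$ coincide.

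Now apply Theorem~\ref{thm-resolution} to $\tilde{Y}\to Y$ with the s.n.c. divisor $D$ and open set $U$. This yields a sequence of blowups $g:Y_m\to Y$ of smooth centers lying over $Y\setmin U$ and meeting the inverse image of $D$ together with the exceptional locus normally, together with a morphism $Y_m\to\tilde{Y}$ extending $Y_m\to Y\dra\tilde{Y}$. Composing with $\tilde{Y}\to Z\hookrightarrow\tilde{X}$ gives the required factorization $Y_m\to\tilde{X}$, since by the coincidence noted above the composition $Y_m\to\tilde{X}\to X$ agrees with $Y_m\to Y\to X$. The only technical subtlety is ensuring that $\tilde{Y}\to Y$ is projective rather than merely proper, which is precisely why projectivity of the envelope $\pi$ is used; everything else is a direct application of Hironaka's theorem to the right birational model of $Y$.
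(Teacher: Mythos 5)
Your proposal is correct and follows essentially the same route as the paper: both form the fiber product of $Y$ and $Z$ over (the image of) $X$, take the piece dominating $Y$ to get a projective birational model $\tilde{Y}\to Y$ that is an isomorphism over $U$, and then invoke Theorem~\ref{thm-resolution} to produce the blowup sequence and the extension $Y_m\to\tilde{Y}\to\tilde{X}$. The extra details you supply (projectivity of $Z\to\phi(Y)$ via projectivity of $\pi$, and the agreement of the two compositions to $X$) are exactly the points the paper leaves implicit.
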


\begin{proof}
  Let $\tilde{Y}$ be the component of $Y \times_X Z$ that dominates $Y$.
Since $Z \to \phi(Y)$ is projective and birational, the projection $f:
\tilde{Y}\to Y$ is also projective and birational. Moreover, $f$ is an
isomorphism over $U$. We may now apply Theorem~\ref{thm-resolution}.
\end{proof}

The following result can be viewed as an embedded elimination of
indeterminacies.

\begin{corollary} \label{cor-res} Let $W$ be a smooth variety, $D, E$ be
effective divisors on $W$ such that $D+E$ has s.n.c., and $\phi:W\to X$ be a
proper morphism. Then there exists a birational morphism $g:\tilde{W}\to
W$, obtained by a sequence of blowups of smooth centers that lie over
$|D|$ and intersect the pull-back of $D+E$ together with the exceptional
locus normally, such that for every component $\tilde{D}_i$ of the
pull-back $\tilde{D}=g^*(D)$, the composition $\tilde{D}_i\to \tilde{W}
\to W\to X$ factors through $\tilde{X}\to X$.
\end{corollary}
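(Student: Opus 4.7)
The plan is to iterate Corollary~\ref{cor-resolution} on the (evolving) components of the total transform of $D$, reducing a well-chosen lexicographic invariant. Throughout I would maintain a tower of blowups $W = W^{(0)} \leftarrow W^{(1)} \leftarrow \cdots$ with smooth centers lying over $|D|$ and meeting the current s.n.c. divisor $T^{(k)}$ (the pull-back of $D+E$ plus the exceptional locus built up so far) normally. Let $\tilde D^{(k)}$ denote the pull-back of $D$ to $W^{(k)}$, and let $\mathcal{S}^{(k)}$ be the set of components of $\tilde D^{(k)}$ whose composition to $X$ does not yet factor through $\tilde X$. The goal is to drive $\mathcal{S}^{(k)}$ to the empty set.

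As a measure of progress I would use
\[
\mu^{(k)} = \bigl(\max_{F \in \mathcal{S}^{(k)}} \dim \phi(F),\ \#\{F \in \mathcal{S}^{(k)} : \dim \phi(F) \text{ is maximal}\}\bigr),
\]
ordered lexicographically. At a given stage pick $F \in \mathcal{S}^{(k)}$ with $\delta := \dim \phi(F)$ maximal. Because $F$ is a smooth component of $T^{(k)}$, the restriction $(T^{(k)} - F)|_F$ is s.n.c. on $F$. Apply Corollary~\ref{cor-resolution} to $\phi|_F \colon F \to X$ equipped with this s.n.c. divisor: it yields a sequence of blowups of $F$ along smooth centers $C_1, \ldots, C_m$, each meeting the s.n.c. divisor plus accumulated exceptional locus normally, after which the final proper transform $\tilde F$ factors through $\tilde X$. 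Crucially, the $C_i$ lie over $F \setminus \phi^{-1}(V)$ for some $V$ dense open in $\phi(F)$, so $\dim \phi(C_i) < \delta$.

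I would then lift each of these blowups to $W$ by blowing up the same $C_i$ viewed inside $W^{(k+i-1)}$. Four points need checking: $(\mathrm{i})$ $C_i$ is smooth in $W^{(k+i-1)}$, inherited from its smoothness in the current proper transform $F^{(i-1)}$ (a smooth closed subvariety); $(\mathrm{ii})$ $C_i$ meets $T^{(k+i-1)}$ normally, via a local check in parameters where $F^{(i-1)} = \{y_0 = 0\}$ and $T^{(k+i-1)} - F^{(i-1)}$ is a monomial in $\bar y_1, \ldots, \bar y_n$ (the center lifts to a coordinate subscheme cut out by $y_0$ together with some of the $y_j$); $(\mathrm{iii})$ the image of $C_i$ in $W$ lies in $|D|$, since $F$ is a component of the pull-back of $D$; and $(\mathrm{iv})$ the new $T^{(k+i)}$ remains s.n.c. by the standard blowup calculus for transverse centers.

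Finally, I would update $\mathcal{S}$ after this block: the proper transform of $F$ has left $\mathcal{S}$, the proper transforms of the other components retain their $\phi$-image dimensions, and each new exceptional divisor $E_i$ satisfies $\dim \phi(E_i) = \dim \phi(C_i) < \delta$. Therefore $\mu^{(k+m)} < \mu^{(k)}$ lexicographically, and since $\mu$ takes values in a well-ordered set the iteration terminates with $\mathcal{S} = \emptyset$, producing the required $\tilde W \to W$. The main obstacle I anticipate is the coordinated bookkeeping that simultaneously preserves the s.n.c. structure of $T^{(k)}$ and tracks which components already factor as one passes from blowups of $F$ to ambient blowups of $W^{(k)}$; the key ingredient enabling the induction is the strict inequality $\dim \phi(C_i) < \dim \phi(F)$, which prevents new exceptional divisors from appearing at the maximum $\phi$-dimension.
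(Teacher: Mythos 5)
Your proposal is correct and follows essentially the same route as the paper: apply Corollary~\ref{cor-resolution} to one component of the (pull-back of) $D$ with its induced s.n.c.\ divisor, perform the same blowups ambiently on $W$, and observe that the new exceptional components have strictly smaller $\phi$-image dimension, concluding by induction on that dimension. Your lexicographic invariant and the local check that the centers meet the ambient s.n.c.\ divisor normally are just explicit versions of steps the paper's proof leaves implicit.
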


\begin{proof}
Let $D_i$ be a component of $D$. Let $D'$ be the divisor
$D'=(D+E-qD_i)|_{D_i}$, where $q$ is the coefficient of $D_i$ in $D+E$.
We apply Corollary~\ref{cor-resolution}  to the map $D_i \to X$ and the
s.n.c. divisor $D'$ on $D_i$. The result is a sequence of blowups
$\tilde{D}_i\to D_i$ so that the composition $\tilde{D}_i\to D_i\to X$
factors through $\tilde{X}$. The centers of the blowups all lie over
$D_i\setmin U$, where $U=\phi^{-1}(V)$ for some nonempty open $V\subset
\phi(D_i)$.

Let us now perform the same sequence of blowups on $W$ (blow up $W$
along the same centers lying in $D_i$ and in the strict transforms of
$D_i$), to get $g: \tilde{W}\to W$. Then $\tilde{D}_i$ is isomorphic to
the strict transform of $D_i$ in $\tilde{W}$. Such blowups introduce new
components to the divisor $g^*(D)$. However, we claim that all these new
exceptional components have image in $X$ of smaller dimension than the
image of $D_i$. Indeed, by the choice of $U$, the centers of blowups lie
over the closed set $\phi(D_i)\setmin V \subset X$, and so do the
exceptional divisors.
Thus, by induction on the dimension of the
image in $X$, we can resolve the indeterminacies of all components of
(the pull-back of) $D$.
\end{proof}


\subsection{Distinguished Liftings}

Let $[Y\to X]$ be an element of $\cM(X)$, with $Y$ irreducible. We construct a lifting of this cycle to $\Omega_*(X)_\pi$.

Let $W=Y\times \PP^1$, $D=Y\times\{0\} \subset W$, and $f:W\to X$ be the composition of the projection to $Y$ and $Y\to X$. We apply Corollary~\ref{cor-res} to this situation (with $E=0$) to find a blowup $g:\tilde{W}\to W$. Let $\tilde{D} = g^*(D)$ be the pull-back of $D$, a possibly nonreduced s.n.c. divisor. 

Consider the class $[\tilde{D}\to |\tilde{D}|] \in \Omega_*(|\tilde{D}|)$.
Note that any map $Z\to |\tilde{D}|$ from an irreducible variety $Z$ has image in a component $\tilde{D}_i$ of $\tilde{D}$. Since $\tilde{D}_i\to X$ factors through $\tilde{X}\to X$, the composition $Z\to \tilde{D}_i \to X$ also factors. Similarly, an irreducible double point degeneration in $|\tilde{D}|$, when pushed forward to $X$, factors through $\tilde{X}$. It follows that the push-forward map $\Omega_*(|\tilde{D}|) \to \Omega_*(X)$ factors through $\Omega_*(X)_\pi$. We define a distinguished lifting of $[Y\to X]$ to be the image of  $[\tilde{D}\to  |\tilde{D}|]$ in $\Omega_*(X)_\pi$. Note that a distinguished lifting depends on the choice of the blowup $\tilde{W}\to W$, but not on the liftings $\tilde{D}_i\to \tilde{X}$ of $\tilde{D}_i\to X$.


\subsection{Product of Divisor Classes}

Let  $D$ and $E$ be effective divisors on a scheme $W \in \Smk$, such that $D+E$ has s.n.c. We define the class 
\[ [D\bullet E \to |D|\cap|E|] \in \Omega_*( |D|\cap|E|)\]
with the property that, when pushed forward to $W$, it becomes equal to 
\[ \ch(\cO_W(D))\circ \ch(\cO_W(E)) (1_W).\]

Let $D=\sum_i n_i D_i$ and $E=\sum_i p_i D_i$, where $D_i$, $i=1,\ldots,r$ are irreducible divisors. For $i=1,\ldots,r$, let $L_i=\cO_W(D_i)$. If $J\subset \{1,\ldots,r\}$ is such that $n_j\neq 0$ and $p_i\neq 0$ for some $i,j\in J$,  let  $i^J: D^J=\cap_{i\in J} D_i \hookrightarrow |D|\cap |E|$, and $L_i^J =  L_i|_{D^J}$.

Let the class $[D\bullet E \to |D|\cap|E|]$ be defined by the formula 
\[ \sum_{I, J} i_*^{I\cup J} F_J^{n_1,\ldots,n_r} (L_1^{I\cup J}, \ldots, L_r^{I\cup J}) F_I^{p_1,\ldots,p_r} (L_1^{I\cup J}, \ldots, L_r^{I\cup J}) \prod_{i\in I\cap J} \ch(L_i^{I\cup J})  (1_{D^{I\cup J}}).\]
Here the sum runs over pairs of nonempty subsets $I, J\subset \{1,\ldots,r\}$, such that $n_j \neq 0$ and $p_i\neq 0$ for all $j\in J$ and $i\in I$. 
As in the case of divisor classes, it is enough to assume that the divisors $D_i$ are smooth but not necessarily irreducible.

We claim that $[D\bullet E \to |D|\cap|E|]$, when pushed forward to $|D|$, becomes equal to  $\ch(\cO_W(E)|_{|D|}) [D\to |D|]$. To see this, we apply $\ch(\cO_W(E)|_{|D|})$ (which we shorten to  $\ch(\cO(E))$) to the definition of $[D\to |D|]$: 
\[ \ch(\cO(E)) [D\to |D|] =\\
 \sum_J i_*^J F_J^{n_1,\ldots,n_r} (L_1^J, \ldots, L_r^J) \ch(\cO(E)) (1_{D^J}).\]
We can now use the same divisor class formula to compute $\ch(\cO(E)) (1_{D^J})$:
\begin{alignat*}{2}
  \ch(\cO(E)) (1_{D^J}) &= F^{p_1,\ldots,p_r} (L_1^J, \ldots, L_r^J) (1_{D^J})\\
&= \sum_I F_I^{p_1,\ldots,p_r} (L_1^J, \ldots, L_r^J) \prod_{i\in I} \ch(L_i^J)(1_{D^J}).
\end{alignat*}
Applying the property (Sec), we get
\[ \prod_{i\in I} \ch(L_i^J)(1_{D^J}) = \prod_{i\in I\cap J} \ch(L_i^J) ([D^{I\cup J}\to D^J]).\]
Now putting the formulas back together and using the compatibility of the first Chern class operators with pull-backs proves the claim. 

Note that the above definition is symmetric in $E$ and $D$,
\[  [D\bullet E \to |D|\cap|E|] = [E\bullet D \to |D|\cap|E|].\]
This implies that, when pushed forward to $|D|\cup|E|$, the classes $\ch(\cO(E)) [D\to |D|]$ and $\ch(\cO(D)) [E\to |E|]$ become equal.

When $D$ is a smooth divisor that does not have common components with $E$, then $E'=E|_{|D|}$ is an s.n.c. divisor on $|D|$ and one has that 
\[ [D\bullet E \to |D|\cap|E|] = [E' \to |E'|].\]
To prove this, let $D=D_1$, $E=\sum_{i>1} p_i D_i$, and let the sum in the definition run over nonempty subsets $J\subset\{1\}$ and $I\subset\{2,\ldots,r\}$. Since $F_J^{1,0,\ldots,0} = 1$ for $J=\{1\}$, the sum simplifies to the expression defining the divisor class $[E' \to |E'|]$.


\subsection{Double Cobordisms}

Let $W$ be a scheme in $\Smk$,  $f: W\to \PP^1\times \PP^1$  a  morphism, $D=f^{*}(\PP^1\times\{0\})$, $E=f^{*}(\{0\}\times \PP^1)$. Assume that $D+E$ is an s.n.c. divisor on $W$. Write
\begin{alignat*}{3}
 D &=& \sum_i a_i D_i +\sum_i \alpha_i F_i,\\
E &= &\sum_i b_i E_i +\sum_i \beta_i F_i,
\end{alignat*}
where $F_i$ are the components of $D+E$ lying over $(0,0)\in\PP^1\times \PP^1$ and $D_i, E_i$ are the other components of $D$ and $E$. We may assume that $\alpha_i, \beta_i >0$.  

\begin{lemma} \label{lem-double}
With notation as above, let $D' = \sum_i a_i D_i$. Then the classes 
\[  [E\bullet D \to |E|\cap|D|]\qquad\text{and}\qquad  [E\bullet D' \to |E|\cap|D'|] \]
 become equal when pushed forward to $|E|$. 
\end{lemma}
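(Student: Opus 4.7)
The plan is to use the pushforward formula from the previous subsection to reduce the comparison to an operator identity on $[E\to|E|]$, then use the formal group law to reduce further to a single Chern class vanishing, and finally to obtain that vanishing from the symmetry of the bullet class combined with the $\PP^1\times\PP^1$ structure of $f$.

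First, pushing $[E\bullet D\to|E|\cap|D|]$ and $[E\bullet D'\to|E|\cap|D'|]$ forward to $|E|$, the formula established just above yields $\ch(\cO_W(D)|_{|E|})[E\to|E|]$ and $\ch(\cO_W(D')|_{|E|})[E\to|E|]$ respectively, so it suffices to show that these classes agree in $\Omega_*(|E|)$. Writing $F=\sum_i\alpha_i F_i$ and using $\cO_W(D)\cong\cO_W(D')\otimes\cO_W(F)$, property (FGL) yields
\[ \ch(\cO_W(D))-\ch(\cO_W(D')) = \ch(\cO_W(F))\cdot H, \]
where $H$ is a power series in $\ch(\cO_W(D'))$ and $\ch(\cO_W(F))$ (coming from the factorization $F_\LL(u,v)-u=v\cdot(1+\sum_{i,j\geq 1}a_{ij}u^iv^{j-1})$). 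Because first Chern class operators commute, this reduces the task to proving $\ch(\cO_W(F)|_{|E|})[E\to|E|]=0$. Expanding $\ch(\cO_W(F))$ by (FGL) as a power series in the operators $\ch(\cO_W(F_i))$ in which every nonzero term carries at least one factor $\ch(\cO_W(F_i))$, the same commutativity argument reduces further to proving
\[ \ch(\cO_W(F_i)|_{|E|})[E\to|E|]=0 \text{ in }\Omega_*(|E|) \]
for each common component $F_i$.

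To establish this, I would apply the pushforward formula \emph{symmetrically} to the bullet class $[E\bullet F_i\to|E|\cap|F_i|]$. Since $F_i\subseteq|E|$, we have $|E|\cap|F_i|=F_i$, and this class lives in $\Omega_*(F_i)$. Its pushforward to $|E|$ equals $\ch(\cO_W(F_i)|_{|E|})[E\to|E|]$, while its pushforward to $|F_i|=F_i$ is the identity, so the class itself equals $\ch(\cO_W(E)|_{F_i})(1_{F_i})$ in $\Omega_*(F_i)$. The key observation is that $\cO_W(E)|_{F_i}$ is trivial: the divisors $\{0\}\times\PP^1$ and $\{\infty\}\times\PP^1$ on $\PP^1\times\PP^1$ are linearly equivalent (they differ by the principal divisor of the first coordinate), so $\cO_W(E)\cong\cO_W(E_\infty)$ with $E_\infty=f^*(\{\infty\}\times\PP^1)$; since $F_i$ is a common component of $D$ and $E$, its image under $f$ lies in $(\PP^1\times\{0\})\cap(\{0\}\times\PP^1)=\{(0,0)\}$, disjoint from $\{\infty\}\times\PP^1$, so $F_i\cap|E_\infty|=\emptyset$, and the tautological section of $\cO_W(E_\infty)$ trivializes $\cO_W(E_\infty)|_{F_i}$. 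Consequently $\ch(\cO_W(E)|_{F_i})(1_{F_i})=0$, so $[E\bullet F_i\to F_i]=0$ in $\Omega_*(F_i)$, and pushing forward to $|E|$ yields the desired vanishing.

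The main obstacle is identifying this symmetric reduction: the formal group law manipulations are routine, but the crucial step is recognizing that applying the pushforward formula to $[E\bullet F_i]$, with $F_i$ playing the role of the second divisor, converts a fairly opaque Chern class vanishing into the triviality of a single line bundle, which then follows immediately from the linear equivalence built into the $\PP^1\times\PP^1$ structure of $f$.
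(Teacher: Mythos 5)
Your proposal is correct and follows essentially the same route as the paper: push both bullet classes to $|E|$, use the formal group law to reduce to the vanishing of $\ch(\cO_W(\sum_i\alpha_i F_i))[E\to|E|]$, and obtain that vanishing by reading the symmetric bullet class off on the support of the common components, where $\cO_W(E)$ is trivial because those components lie over $(0,0)\in\PP^1\times\PP^1$. Your only deviations are cosmetic: you split $\sum_i\alpha_i F_i$ into individual components $F_i$ (the paper treats the whole divisor at once) and you justify the triviality via linear equivalence with $\{\infty\}\times\PP^1$ rather than noting directly that the restriction is pulled back from a point.
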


\begin{proof}
 Pushed forward to $|E|$, the class  $[E\bullet D \to |E|\cap|D|]$ becomes equal to 
 \begin{gather*}
   \ch(\cO(D))[E\to|E|] = \ch(\cO(D')\otimes \cO(\sum_i \alpha_i F_i) )[E\to|E|] \\
= \ch(\cO(D'))[E\to|E|]  + \ch(\cO(\sum_i \alpha_i F_i) )[E\to|E|] \\
\qquad + \sum_{j,l\geq 1} a_{j,l}  \ch(\cO(D'))^j \ch(\cO(\sum_i \alpha_i F_i) )^l [E\to|E|],
\end{gather*}
where $a_{j,l}\in\LL$ are the coefficients of the formal group law. The first term in the sum gives $[E\bullet D' \to |E|\cap|D'|]$ pushed forward to $|E|$. It suffices to prove that the second term vanishes, because in that case the third term also vanishes. Now  $\ch(\cO(\sum_i \alpha_i F_i) )[E\to|E|]$ is the push-forward to $|E|$ of the class $[(\sum_i \alpha_i F_i) \bullet E \to  |\sum_i \alpha_i F_i| \cap |E|]$. If we instead push this class forward to $|\sum_i \alpha_i F_i|$, we get $\ch(\cO(E)) [\sum_i \alpha_i F_i \to  |\sum_i \alpha_i F_i| ] = 0$ because $\cO(E)$ is trivial on $|\sum_i \alpha_i F_i|$. Since $\beta_i>0$ for all $i$, the inclusion maps factor:
\[ |\sum_i \alpha_i F_i| \cap |E| \hookrightarrow  |\sum_i \alpha_i F_i| \hookrightarrow |E|,\]
and then the push-forward maps also factor.
\end{proof}


\subsection{Uniqueness of Distinguished Liftings}

\begin{lemma}
Let $[Y\to X]$ be a cycle in $\cM(X)$, with $Y$ irreducible, and consider two distinguished liftings of it defined by the images of  $[\tilde{D}_1\to |\tilde{D}_1|]$ and $[\tilde{D}_2\to |\tilde{D}_2|]$ in $\Omega_*(X)_\pi$, where
\begin{gather*}
g_1: \tilde{W}_1 \to Y\times \PP^1, \quad  \tilde{D}_1 = g_1^*(Y\times\{0\}),\\
g_2: \tilde{W}_2 \to Y\times \PP^1, \quad  \tilde{D}_2 = g_2^*(Y\times\{0\}).
\end{gather*}
Then the two distinguished liftings are equal in $\Omega_*(X)_\pi$.
\end{lemma}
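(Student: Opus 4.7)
The plan is to reduce the comparison to a single additional smooth blowup and then express the difference of the two resulting divisor classes as a combination that already dies in $\Omega_*(X)_\pi$, using Lemmas~\ref{lem-pair} and~\ref{lem-double} together with the identification~(\ref{eq1}).

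First, I would reduce to the case $\tilde W_2 = \tilde W_1'$, where $h:\tilde W_1'\to\tilde W_1$ is the blowup along a single smooth center $C$ intersecting $\tilde D_1$ and the exceptional locus of $g_1$ normally and lying over $|D|$. By Corollary~\ref{cor-res} applied to the indeterminacy $\tilde W_1\dra \tilde W_2$ together with the s.n.c.\ divisor $\tilde D_1$, there is a third blowup $g_3:\tilde W_3\to W$ dominating both $g_1$ and $g_2$ and still of the type allowed in the definition of a distinguished lifting. Induction on the number of blowups needed to go from $g_1$ to $g_3$ then reduces the problem to a single smooth blowup.

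Second, to compare the two liftings I would build a cobordism as follows. Set $\bar W=\Bl_{C\times\{0\}}(\tilde W_1\times\PP^1)$ and consider the morphism $f:\bar W\to\PP^1\times\PP^1$ whose first coordinate is the composition $\bar W\to\tilde W_1\to Y\times\PP^1\to\PP^1$ and whose second coordinate is the extra $\PP^1$ factor. Let $\bar D=f^{*}(\PP^1\times\{0\})$ and $\bar E=f^{*}(\{0\}\times\PP^1)$, so that $\bar D$ is the total transform of $\tilde W_1\times\{0\}$, namely the strict transform $\tilde W_1'$ together with the exceptional $\PP^1$-bundle $P\to C$ meeting $\tilde W_1'$ transversely along the exceptional divisor $C'$ of $h$; while $\bar E$ is the total transform of $\tilde D_1\times\PP^1$, whose fiber over $t=\infty$ is a copy of $\tilde D_1\subset\tilde W_1$. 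Every component of $|\bar D|$ and $|\bar E|$ maps to $X$ through $\tilde X$, so all the cycles produced below lie in $\pi_*\Mp(\tilde X)$ modulo $\pi_*\Rel(\tilde X)$, i.e.\ in $\Omega_*(X)_\pi$.

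Third, I would apply Lemma~\ref{lem-double} to $f:\bar W\to\PP^1\times\PP^1$ to delete the components of $\bar E$ lying over $(0,0)$ without changing $[\bar E\bullet\bar D\to|\bar E|\cap|\bar D|]$ after pushforward to $|\bar D|$. Restricting this product to the two smooth components $\tilde W_1'$ and $P$ of $|\bar D|$, expanding via the product-of-divisor-classes formula, and applying Lemma~\ref{lem-pair} to the transverse pair $(\tilde W_1',P)$ in $\bar W$, the $\tilde W_1'$-summand reproduces $[h^{*}\tilde D_1\to|h^{*}\tilde D_1|]$, the restriction of $\bar E$ to $P$ pushes to zero because $\cO(\bar E)$ trivializes along the fibers of $P\to C$, and the $-[\PP_{C'}\to C']$ correction of Lemma~\ref{lem-pair} combines with the fiber contributions $[\tilde W_1'\to\cdot]$, $[P\to\cdot]$ and the $\infty$-fiber contribution $[\tilde D_1\to|\tilde D_1|]$ into a double point relation coming from the degeneration $\bar W\to\PP^1$ given by the second coordinate of $f$; since that degeneration and its boundary strata all factor through $\tilde X$, this relation already lies in $\Rel(\tilde X)$. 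The residual $\beta$-term from Lemma~\ref{lem-pair} is killed by property (Sect) after picking the auxiliary divisor so that the extra line bundle pulled back to $C'$ is trivial. Modulo $\Rel(\tilde X)+K_\pi$ this exactly equates the two distinguished liftings.

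The hard part will be this third step: the bookkeeping needed to confirm that every residual contribution in the fiberwise comparison is either killed by (Sect) via a trivial pullback line bundle or is part of a genuine double point relation in $\Rel(\tilde X)$. Once that is done, (\ref{eq1}) yields the desired equality in $\Omega_*(X)_\pi$.
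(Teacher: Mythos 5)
Your overall architecture (reduce to a single smooth blowup, then compare the two boundary divisor classes on $\Bl_{C\times\{0\}}(\tilde W_1\times\PP^1)$ using the product-of-divisor-classes and Lemma~\ref{lem-double}) is the paper's, but your reduction step has a genuine gap. Resolving the indeterminacy of $\tilde W_1\dra\tilde W_2$ via Theorem~\ref{thm-resolution} produces $\tilde W_3\to\tilde W_1$, a composition of permitted blowups with $\tilde W_3\to\tilde W_2$ a morphism, and your induction on single blowups then identifies the lifting defined by $\tilde W_1$ with the one defined by $\tilde W_3$. But you still must compare $\tilde W_3$ with $\tilde W_2$, and the morphism $\tilde W_3\to\tilde W_2$ is in general \emph{not} a composition of blowups along smooth centers lying in and meeting $g_2^*(Y\times\{0\})$ normally; blowing up $\tilde W_2$ further to fix this only recreates the same one-sided situation (this is exactly the strong-versus-weak factorization issue). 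The paper closes this by invoking the weak factorization theorem to factor the morphism $\tilde W_1\to\tilde W_2$ into blowups \emph{and blowdowns} whose centers lie in the support of the pulled-back divisor and meet it normally; your plan contains no substitute for this, so the claimed "reduction to a single smooth blowup" is unjustified.

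In the comparison step the construction $\bar W=\Bl_{C\times\{0\}}(\tilde W_1\times\PP^1)$ is the right one, but the bookkeeping you propose does not compute the right thing. The exceptional divisor $P$ is a $\PP^{\codim(C,\tilde W_1)}$-bundle over $C$, not a $\PP^1$-bundle, and the double point relation of the degeneration $\bar W\to\PP^1$ relates the $(\dim Y+1)$-dimensional cycles $[\tilde W_1],[\tilde W_1'],[P],[\PP_{C'}]$, whereas what must be compared are the $\dim Y$-dimensional classes $[\tilde D_1\to|\tilde D_1|]$ and $[\tilde D_2\to|\tilde D_2|]$; your "$\infty$-fiber contribution $[\tilde D_1\to|\tilde D_1|]$" conflates the $\infty$-fiber of that degeneration (a copy of $\tilde W_1$) with the restriction of $\bar E$ to it, and killing the $\beta$-term of Lemma~\ref{lem-pair} "by (Sect) after picking the auxiliary divisor" is not an argument. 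In fact neither Lemma~\ref{lem-pair} nor any appeal to $\Rel(\tilde X)$ is needed here: since the strict transform $\bar D'\cong\tilde W_2$ is smooth, has no component in common with $\bar E$, and $\bar E|_{\bar D'}=\tilde D_2$, one has $[\bar D'\bullet\bar E\to|\bar D'|\cap|\bar E|]=[\tilde D_2\to|\tilde D_2|]$; Lemma~\ref{lem-double} lets you replace $\bar D'$ by $\bar D=f^*(\PP^1\times\{0\})$ after push-forward to $|\bar E|$ (the exceptional component is discarded because it maps to the point $(0,0)$, so $\cO(\bar E)$ is trivial on it --- not merely fiberwise trivial over $C$), giving $\ch(\cO(\bar D))[\bar E\to|\bar E|]$; and since $\cO(\bar D)\cong\cO(f^*(\PP^1\times\{\infty\}))$ this class is the copy of $[\tilde D_1\to|\tilde D_1|]$ embedded in $|\bar E|$ by the section at $\infty$. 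Pushing both to $|\tilde D_1|$ and then to $\Omega_*(X)_\pi$ (legitimate, as you note, because every component of $|\bar E|$ lifts to $\tilde X$) finishes the proof; Lemma~\ref{lem-pair} and the $\beta$-bookkeeping belong to the later step showing $d(\Rel(X))=0$, not to this uniqueness lemma.
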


\begin{proof} We may assume that the birational map $ \tilde{W}_1 \dra \tilde{W}_2$ is a morphism. (Otherwise find a third variety $\tilde{W}_3$ that maps to both of them.) It suffices to prove that the class $[\tilde{D}_1\to |\tilde{D}_1|]$, when pushed forward to $|\tilde{D}_2|$, becomes equal  to $[\tilde{D}_2\to |\tilde{D}_2|]$.

Since the morphism $\tilde{W}_1 \to \tilde{W}_2$ is proper and both varieties are quasi-projective, the morphism is projective. By the weak factorization theorem \cite{Wlodarczyk, AKMW}, we can factor the birational morphism $ \tilde{W}_1 \to \tilde{W}_2$ into a sequence of blowups and blowdowns along smooth centers. Moreover, the factorization can be chosen so that if $Z_{i+1}\to Z_{i}$ is one blowup of $C\subset Z_i$ in this factorization,  then the birational map $g_i: Z_i \dra \tilde{W}_2$ is a projective morphism, $g_i^*(\tilde{D}_2)$ is an s.n.c. divisor on $Z_i$, the center $C$ lies in the support of the divisor $g_i^*(\tilde{D}_2)$ and intersects it normally. 

We may thus assume that $ \tilde{W}_1 \to \tilde{W}_2$ is the blowup of $\tilde{W}_2$ along a smooth center $C\subset \tilde{W}_2$ that lies in the support of $\tilde{D}_2$ and intersects it normally. 

Let $\tilde{V}_2 = \tilde{W}_2\times\PP^1$. Let  $\tilde{V}_1$ be the blowup of $\tilde{V}_2$ along $C\times\{0\} \subset \tilde{V}_2$. Let $f: \tilde{V}_1 \to \PP^1\times\PP^1$  be the projection. Consider the divisors $D=f^*(\PP^1\times\{0\})$ and $E=f^*(\{0\}\times \PP^1)$. Then $D+E$ is an s.n.c. divisor. Moreover, $D=D'+F$, where $F$ is the exceptional divisor of the blowup, lying over $(0,0)\in\PP^1\times\PP^1$, and $D'\isom \tilde{W}_1$. Since $D'$ is smooth, having no common component with $E$, and $E|_{D'} = \tilde{D}_1$, we get 
\[ [D'\bullet E\to |D'|\cap |E|] = [\tilde{D}_1\to |\tilde{D}_1|]. \]
Lemma~\ref{lem-double} implies that, when pushed forward to $|E|$, this class becomes $\ch(\cO(D))[E\to |E|]$, which itself is equal to  $[\tilde{D}_2\to |\tilde{D}_2|]$ pushed forward to $|E|$ by a section $s: |\tilde{D}_2|\to |E|$ of the projection $|E|\to  |\tilde{D}_2|$ . The two classes are equal when pushed forward to $|\tilde{D}_2|$.
\end{proof}

The previous lemma proves that distinguished liftings are unique. 
We extend the liftings of generators $[Y\to X]$ linearly to a group homomorphism $d: \Mp(X)\to \Omega_*(X)_\pi$. The distinguished lifting of the class $\pi_*[Y\to \tilde{X}] \in \Mp(X)$ is the class $\pi_*[Y\to \tilde{X}] \in \Omega_*(X)_\pi$, hence the composition
 \[ \Mp(\tilde{X}) \stackrel{\pi_*}{\longrightarrow} \Mp(X)  \stackrel{d}{\longrightarrow} \Omega_*(X)_\pi \]
is the canonical projection.

\begin{remark} \label{rem-dist-lift} The proof of the lemma shows that, to define the distinguished lifting of $[Y\to X]$, it is not necessary to require that $\tilde{W}$ is the blowup of $Y\times\PP^1$; we can allow both blowups and blowdowns along centers lying over $Y\times \{0\}$. More precisely, we need a proper birational map $g: \tilde{W}\dra Y\times \PP^1$ from a smooth quasi-projective variety $\tilde{W}$,  satisfying
\begin{enumerate}
\item The map $g$ is a regular isomorphism over $Y\times (\PP^1\setmin\{0\})$.
\item The composition with the second projection $ \tilde{W}\dra Y\times \PP^1 \to \PP^1$ is a morphism whose fiber over $0$ is a divisor $\tilde{D}$ with s.n.c. on  $ \tilde{W}$.
\item The rational map $\tilde{W}\dra Y\times \PP^1 \to X\times \PP^1$ extends to a projective morphism.
\item The morphism $\tilde{D}\to X$ factors through $\pi: \tilde{X}\to X$ on each component of $\tilde{D}$.
\end{enumerate}
Then the image of the class $[\tilde{D}\to |\tilde{D}|]$ in $\Omega_*(X)_\pi$ defines the distinguished lifting of $[Y\to X]$.  
\end{remark}


\subsection{Completion of the Proof of Proposition~\ref{prop-isom}}

It remains to prove that the distinguished lifting $d: \Mp(X)\to \Omega_*(X)_\pi$ maps $\Rel(X)$ to zero. Consider a double point degeneration $f:W\to \PP^1$, $W\to X$.  Let $W_\infty = f^{-1}(\infty)$ be a smooth fiber and  $W_0 = f^{-1}(0) = A \cup B$. Recall that the double point relation is
\[ [W_\infty\to X] - [A\to X] - [B\to X] +[\PP_{A\cap B}\to X],\]
where $\PP_{A\cap B}=  \PP(\cO_{A\cap B}(A) \oplus \cO_{A\cap B})$. Since $\Rel(X)$ is generated by the double point relations, it suffices to prove that 
\[  d[W_\infty\to X] - d[A\to X] -d [B\to X] +d[\PP_{A\cap B}\to X] = 0 .\]

Let $V=W\times \PP^1$. We blow up $V$ along smooth centers lying over $W\times \{0\}$ that intersect the pull-back of $W_0\times \PP^1+ W_\infty\times \PP^1+ W\times\{0\}$  normally.  Let the result be $\tilde{V}$, such that the map from the inverse image of $W\times \{0\}$ to $X$ lifts to $\tilde{X}$ on every irreducible component.  Let
\[ g: \tilde{V} \to W\times\PP^1 \to \PP^1\times\PP^1.\]
Define 
\[ E=g^*(\PP^1\times \{0\}),  \quad D_0=g^*(\{0\}\times \PP^1), \quad  D_\infty=g^*(\{\infty\}\times \PP^1).\]
 Let $D_0'$, $D_\infty'$ be the sums of components in $D_0$, $D_\infty$ that do not map to $(0,0)$ or $(\infty,0)$ in $\PP^1\times\PP^1$. Then $D_\infty'$ is the blowup of $W_\infty\times\PP^1$ along centers lying over $W_\infty\times \{0\}$.  Since $D'_\infty$ is smooth and has no component in common with $E$, it follows that  $[D_\infty'\bullet E \to |D_\infty'|\cap |E|]$ is the divisor class of $E|_{D'_\infty}$, which  gives the distinguished lifting of $[W_\infty\to X]$. 
 
 Similarly,  $D_0'$ is the blowup of $W_0\times\PP^1 = (A\cup B)\times \PP^1$ along centers lying over $W_0\times \{0\}$. The divisor $D_0'$ is a union $A'\cup B'$ of  two smooth divisors, the blowups of $A\times \PP^1$ and $B\times \PP^1$. The intersection of these divisors is a blowup of $(A\cap B)\times\PP^1$. We claim that when pushed forward, $[D_0'\bullet E \to |D_0'|\cap |E|]$ gives the class $d[A\to X] +d [B\to X] -d[\PP_{A\cap B}\to X]$. 
 
 Let $A'=D_1$, $B'=D_2$ and $E=\sum_{i>2} p_i D_i$. Then in the formula defining $[D_0'\bullet E \to |D_0'|\cap |E|]$ we may take the sum over nonempty subsets $J\subset\{1,2\}$ and $I\subset\{3,\ldots,r\}$. We divide the formula into three pieces corresponding to $J=\{1\}, J=\{2\}, J=\{1,2\}$:
 \begin{alignat}{2}  \label{3sums}
 [D_0'\bullet E \to |D_0'|\cap |E|]  
  & =   \sum_{I} i_*^{I\cup \{1\}} F_I^{p_3,\ldots,p_r} (L_3^{I\cup \{1\}}, \ldots, L_r^{I\cup \{1\}}) (1_{D^{I\cup \{1\}}}) \notag \\
  & +   \sum_{I} i_*^{I\cup \{2\}} F_I^{p_3,\ldots,p_r} (L_3^{I\cup \{2\}}, \ldots, L_r^{I\cup \{2\}}) (1_{D^{I\cup \{2\}}})\\
 & +  \sum_{I} i_*^{I\cup \{1,2\}} F_{\{1,2\}}^{1,1} (L_1^{I\cup \{1,2\}}, L_2^{I\cup \{1,2\}}) F_I^{p_3,\ldots,p_r} (L_3^{I\cup \{1,2\}}, \ldots, L_r^{I\cup \{1,2\}}) (1_{D^{I\cup \{1,2\}}}).\notag
 \end{alignat} 
 Here we used that $F^{1,1}_J = 1$ if $|J|=1$. In this triple sum, the first sum is the push-forward of the class $[E|_{A'}\to |E|\cap A']$, hence it gives the distinguished lifting of $[A\to X]$. Similarly, the second sum gives the distinguished lifting of $[B\to X]$.  
 
In the remainder of the proof we show that the third sum in Equation~(\ref{3sums}) gives the distinguished lifting of $-[\PP_{A\cap B}\to X]$. This is sufficient to finish the proof. 
 Indeed, when pushed forward to $|E|$, by Lemma~\ref{lem-double} the classes $[D_\infty'\bullet E \to |D_\infty'|\cap |E|]$ and $[D_0'\bullet E \to |D_0'|\cap |E|]$ are equal to $\ch(\cO_{|E|}(D_\infty))[E\to |E|] = \ch(\cO_{|E|}(D_0))[E\to |E|]$. Since in $|E|\to X$ every component lifts to $\tilde{X}$, the two classes are equal in $\Omega_*(X)_\pi$.

 Consider for each subset $I\subset \{3,\ldots,r\}$ the smooth divisors $A'|_{D^I}$ and $B'|_{D^I}$ on $D^I$, intersecting transversely along $D^{I\cup\{1,2\}}$. Define 
\[ \PP_{D^{I\cup\{1,2\}}} = \PP (\cO_{D^{I\cup\{1,2\}}}(A')\oplus \cO_{D^{I\cup\{1,2\}}}).\]
Using the same formula for $I=\emptyset$ defines $\PP_{D^{\{1,2\}}} = \PP_{A'\cap B'}$. Then clearly
\[ \PP_{D^{I\cup\{1,2\}}} =  \PP_{A'\cap B'} \times_{A'\cap B'} D^{I\cup\{1,2\}}.\]
Let $D_0 = A' + B' + C'$, where $C'=\sum_{i>2} c_i D_i$ is a divisor lying over $(0,0)\in\PP^1\times\PP^1$. Then by Lemma~\ref{lem-pair}, 
\begin{equation} \label{eq-pp}
F_{\{1,2\}}^{1,1} (L_1^{I\cup \{1,2\}}, L_2^{I\cup \{1,2\}}) (1_{D^{I\cup\{1,2\}}}) = -[\PP_{D^{I\cup\{1,2\}}} \to D^{I\cup\{1,2\}}] + \beta_I,
\end{equation}
where 
\[ \beta_I = \sum_{i,j\geq 0, l> 0} b_{ijl} \ch(\cO_{D^{I\cup\{1,2\}}}(A'))^i \ch(\cO_{D^{I\cup\{1,2\}}}(B'))^j \ch(\cO_{D^{I\cup\{1,2\}}}(C'))^l (1_{D^{I\cup\{1,2\}}}),\]
and the coefficients $b_{ijl}\in\LL$ are independent of $I$.

Consider the morphism $\PP_{A'\cap B'} \to A'\cap B' \to  \PP^1$. The projective bundles  $\PP_{A'\cap B'}$ and $\PP_{A\cap B}\times \PP^1$ are isomorphic over $\PP^1\setmin\{0\}$. Moreover, 
the induced birational map $\PP_{A'\cap B'} \to \PP_{A\cap B}\times \PP^1$ satisfies the conditions of Remark~\ref{rem-dist-lift}, hence  $\PP_{A'\cap B'}$ can be used to define the distinguished lifting of $[\PP_{A\cap B}\to X]$.  This distinguished lifting (with minus sign) is obtained by substituting (\ref{eq-pp}) into the third sum of Equation~(\ref{3sums}) and setting all $\beta_I=0$:
\[ \sum_{I} i_*^{I\cup \{1,2\}} F_I^{p_3,\ldots,p_r} (L_3^{I\cup \{1,2\}}, \ldots, L_r^{I\cup \{1,2\}}) (-[\PP_{D^{I\cup\{1,2\}}} \to D^{I\cup\{1,2\}}]).\]
Now it suffices to prove that the sum of the terms in Equation~(\ref{3sums}) involving $\beta_I$ vanishes when pushed forward to $|E|$. Then it also vanishes in $\Omega_*(X)_\pi$.

We claim that there exists a class $\alpha\in\Omega_*(|E|)$, such that for each $I$, the class $\beta_I$ pushed forward to $|E|$ is equal to 
\[ \left(\prod_{i\in I} \ch(\cO_{|E|}(D_i))\right)(\alpha).\]
 For this note that the first Chern class operators on $\Omega_*(|E|)$ commute in the following sense. For each $i,j>2$, when pushed forward to $|E|$, the classes
 \[  \ch(\cO_{D_j}(D_i) )(1_{D_j}) \ \ \textnormal{and} \ \ \ch(\cO_{D_i}(D_j)) (1_{D_i})\]
 become equal. Now consider $\ch(\cO_{D^{I\cup\{1,2\}}}(C')) (1_{D^{I\cup\{1,2\}}})$. Using the formal group law, we can express $\ch(\cO_{D^{I\cup\{1,2\}}}(C'))$ using $\ch(\cO_{D^{I\cup\{1,2\}}}(D_i))$ for $i>2$ and by the commutativity property, $\ch(\cO_{D^{I\cup\{1,2\}}}(C')) (1_{D^{I\cup\{1,2\}}})$ when pushed forward to $|E|$ becomes equal to 
\[ \left(\prod_{i\in I} \ch(\cO_{|E|}(D_i))\right)(\alpha')\]
for some $\alpha'\in \Omega_*(|E|)$, which is independent of $I$. Now set 
\[ \alpha = \sum_{i,j\geq 0, l> 0} b_{ijl} \ch(\cO_{|E|}(A'))^i \ch(\cO_{|E|}(B'))^j \ch(\cO_{|E|}(C'))^{l-1} (\alpha').\]
Then clearly this $\alpha$ satisfies the required property.

The sum of terms in Equation~(\ref{3sums}) involving $\beta_I$, when pushed forward to $|E|$, becomes equal to 
 \[  F^{p_3,\ldots,p_r} ( \cO_{|E|}(D_3),  \ldots, \cO_{|E|}(D_r)) (\alpha) = \ch(\cO_{|E|}(E))(\alpha).\]
 Since $\cO_{|E|}(E)$ is trivial, this class vanishes.
\qed

\bibliographystyle{plain}
\bibliography{cobordismbib}

\end{document}